\documentclass[12pt,reqno,oneside]{amsart}
\usepackage{etoolbox}

\makeatletter
\patchcmd{\enddoc@text}
  {\@setaddresses}
  {}
  {}{}
\apptocmd{\@maketitle}
  {\vspace{0.1em}\@setaddresses}
  {}{}
\makeatother

\usepackage[T2A]{fontenc}
\usepackage[utf8]{inputenc}
\usepackage[english]{babel}
\usepackage[overload]{textcase}

\usepackage[backend=biber,style=ieee,autolang=other]{biblatex}
\AtEveryBibitem{%
  \ifentrytype{article}{%
    \clearfield{url}%
    \clearfield{urlyear}%
    \clearfield{urlmonth}%
    \clearfield{urlday}%
    \clearfield{issn}%
  }{}%
  \ifentrytype{inproceedings}{%
    \clearfield{url}%
    \clearfield{urlyear}%
    \clearfield{urlmonth}%
    \clearfield{urlday}%
  }{}%
}

\usepackage{amsmath, amssymb, amsthm, amsfonts}
\usepackage{geometry}
\usepackage[colorlinks=true, linkcolor=blue, citecolor=red, urlcolor=blue]{hyperref}
\usepackage{graphicx}
\graphicspath{{./images/}}
\usepackage{xcolor}
\usepackage{enumitem}
\usepackage{comment}

\theoremstyle{plain}
\newtheorem{theorem}{Theorem}[section]

\newtheorem{proposition}[theorem]{Proposition}

\theoremstyle{definition}
\newtheorem{definition}[theorem]{Definition}

\theoremstyle{remark}
\newtheorem{remark}[theorem]{Remark}

\numberwithin{equation}{section}

\begin{document}

\title[The Partition of Paley Graphs into Petersen Graphs]{The Partition of Paley Graphs into Petersen Graphs and a New Strongly Regular Graph with Parameters (50, 21, 8, 9)}

\author{Viktor A. Byzov}
\address{Vyatka State University, Kirov, Russia}
\email{vbyzov@yandex.ru}

\subjclass[2020]{Primary 05E30; Secondary 05C25}

\date{\today}

\begin{abstract}
This paper utilizes an extremely simple idea: in the multiplicative group of a finite field $F_q$, cosets of a certain subgroup are considered, and an attempt is made to combine these cosets into pairs such that the resulting induced subgraph in the corresponding Paley graph is strongly regular. It is shown that there exists an infinite sequence of Paley graphs that can be partitioned into Petersen graphs in this manner. Furthermore, a new strongly regular graph with parameters $(50, 21, 8, 9)$ is constructed using this method.

\noindent\textit{Keywords}: strongly regular graph, Paley graph, Petersen graph, two-graph, Seidel switching, quadratic character, Weil bound.
\end{abstract}

\maketitle

\section{Introduction}
\label{sec:intro}

In graph theory, the area related to partitioning the vertex set into induced subgraphs of a given type is actively developing. We note the classical paper by W.\,H.~Haemers and D.\,G.~Higman~\cite{haemers_strongly_1989}, which investigates the partition of the vertex set $V = V_1 \cup V_2$ of a strongly regular graph such that both induced subgraphs $G[V_1]$ and $G[V_2]$ are themselves strongly regular graphs, cliques, or independent sets (cocliques). The paper presents methods for constructing graphs that admit a strongly regular decomposition, and also provides proofs of the non-existence of decompositions for certain parameter sets. In addition, the article contains a summary table of all feasible parameter sets for graphs with up to 300 vertices, in which the question of their existence is definitively resolved for most cases.

In the work of A.\,D.~Sankey~\cite{sankey_strongly_2021}, the ideas of Haemers and Higman are developed further: the author investigates strongly regular designs that admit fusion into a strongly regular graph. The author applies elementary methods to derive decomposition conditions equivalent to the theorem of Haemers and Higman from~\cite{haemers_strongly_1989}. The paper presents an updated table of feasible parameters for this class of configurations, analyzes graph constructions, and proves conditions for their non-existence. The topic of the decomposition of strongly regular graphs is also addressed in the works of A.\,L.~Gavrilyuk and V.\,V.~Kabanov~\cite{gavrilyuk_strongly_2023} and~\cite{gavrilyuk_strongly_2025}. These papers investigate strongly regular graphs that can be decomposed into a divisible design graph and a Hoffman coclique or a Delsarte clique.

In this paper, we investigate the possibility of representing the vertex set of a Paley graph in the form $V = V_0 \cup V_1 \cup \ldots \cup V_k$, where $V_0$ is a single-element set containing the zero vertex, and $V_i$ for $i \geq 1$ are vertex sets such that the induced subgraphs $G[V_i]$ are isomorphic to the Petersen graph. Here, the vertex sets $V_i$ are obtained by combining cosets of a subgroup of order five in the multiplicative group of a finite field into pairs. Additionally, in this work, the idea of combining pairs of cosets is used to construct a new strongly regular graph with parameters $(50, 21, 8, 9)$.

\section{Preliminaries}
\label{sec:preliminaries}
We briefly discuss the basic concepts and statements necessary for the further exposition.
\subsection{Weil bound for multiplicative characters}

Let $q$ be a positive integer power of a prime $p$. We denote by $\mathbb{F}_q$ the finite field with $q$ elements, and by $\mathbb{F}_q^{\times}$ its multiplicative group.

We will use the symbol $\chi$ to denote the quadratic character of the field $\mathbb{F}_q$. For any $a\in \mathbb{F}_{q}^{\times}$,
\begin{equation}
\chi(a) = \begin{cases}
    1, \; \text{ if } \; a \in \left(\mathbb{F}_{q}^{\times}\right)^2, \\
    -1, \; \text{ otherwise.}
\end{cases}
\end{equation}

In this paper, we will need the following statement.

\begin{theorem}[{Weil, 1948 \cite[p.~225]{lidl_finite_1996}}]
\label{th:weil}
Let $\psi$ be a multiplicative character of the field $\mathbb{F}_q$ of order $m > 1$, and let $f\in \mathbb{F}_q[x]$ be a monic polynomial of positive degree that is not the $m$-th power of another polynomial. If $d$ is the number of distinct roots of the polynomial $f$ in its splitting field over $\mathbb{F}_q$, then for every $a\in \mathbb{F}_q$ the following inequality holds:
\begin{equation}
\left|\sum_{c\in \mathbb{F}_q} \psi(a f(c))\right| \leq (d - 1) q^{1/2}.
\end{equation}
\end{theorem}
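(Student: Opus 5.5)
Since $\psi(0)=0$, the case $a=0$ is trivial. For $a\neq 0$ we have $\psi(af(c))=\psi(a)\psi(f(c))$ with $|\psi(a)|=1$, so it suffices to bound $S=\sum_{c\in\mathbb{F}_q}\psi(f(c))$. I would follow the standard route through $L$-functions: first show that the relevant $L$-function is a polynomial of degree at most $d-1$, and then bound its reciprocal roots using the Riemann hypothesis for curves. This is the classical proof; it is not carried out in the paper, where the result is only cited from \cite{lidl_finite_1996}.

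\emph{Step 1 (the $L$-function is a polynomial).} For $k\ge 1$ put $S_k=\sum_{c\in\mathbb{F}_{q^k}}\psi\bigl(N_{\mathbb{F}_{q^k}/\mathbb{F}_q}(f(c))\bigr)$, so that $S_1=S$. Define the multiplicative function $\lambda$ on monic $g\in\mathbb{F}_q[x]$ by $\lambda(g)=\psi(\operatorname{Res}(g,f))$, up to a harmless sign convention. On a monic irreducible $g$ of degree $k$ with root $c$, one gets $\lambda(g)=\psi(N(f(c)))$. Counting roots then gives the usual identity
\begin{equation*}
L(T):=\sum_{g\ \text{monic}}\lambda(g)T^{\deg g}=\exp\Bigl(\sum_{k\ge1}\frac{S_k}{k}T^k\Bigr).
\end{equation*}
The value $\lambda(g)$ depends only on $g$ modulo $f_0=\operatorname{rad}(f)$, which has degree $d$, and it gives a character of $(\mathbb{F}_q[x]/f_0)^\times$. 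This character is nontrivial, precisely because $f$ is not an $m$-th power. Here one reduces the exponents in $f=\prod(x-\alpha_i)^{e_i}$ modulo $m$ and uses that $\psi$ has order exactly $m$; since $f$ is monic, the case where $f$ differs from an $m$-th power by a constant does not arise. For $n\ge d$, the monic $g$ of degree $n$ run uniformly over all residues mod $f_0$, so the coefficient of $T^n$ vanishes by orthogonality. Hence $L(T)$ is a polynomial of degree at most $d-1$. Moreover its constant term is $1$, and the coefficient of $T$ is $S$. Writing $L(T)=\prod_{j=1}^{d-1}(1-\omega_jT)$, where some $\omega_j$ may be zero, we get $S_k=-\sum_j\omega_j^k$ for all $k$.

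\emph{Step 2 (Riemann hypothesis).} It remains to show $|\omega_j|\le q^{1/2}$ for all $j$. Then $|S|=|\sum_j\omega_j|\le(d-1)q^{1/2}$ follows at once. This step is the main obstacle. I would reduce it to a weaker estimate: it is enough to prove $|S_k|\le Cq^{k/2}$ for all $k$, with $C$ independent of $k$. Indeed, if some $|\omega_j|>q^{1/2}$, then the power sums $\sum_j\omega_j^k$ would exceed $Cq^{k/2}$ along a subsequence of $k$, by the standard Dirichlet/Kronecker argument on the arguments of the $\omega_j$ of maximal modulus.

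To obtain the weaker estimate I would use one of two approaches. The first is to count points on the curve $y^m=f(x)$ over $\mathbb{F}_{q^k}$, which expresses $\sum_{\chi'}S_k(\chi')$ over the characters $\chi'$ of order dividing $m$, and then apply Weil's theorem for curves. The second is the Stepanov--Schmidt polynomial method: construct an auxiliary polynomial vanishing to high order at the points $c$ with $f(c)^{(q^k-1)/m}$ equal to a fixed root of unity, which yields the bound $Cq^{k/2}$ on each fibre. I expect most of the work to lie in this auxiliary-polynomial construction, or equivalently in invoking the curve Riemann hypothesis. Steps 1 and the final deduction are routine.
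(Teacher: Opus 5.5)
The paper gives no proof of this statement. It is quoted from Lidl--Niederreiter and used as a black box, only for the quadratic character and squarefree polynomials. Your outline is the standard $L$-function proof, and Step~1 is correct as written. The sign $\psi(-1)^{\deg g\cdot\deg f}$ depends only on $\deg g$, so it does not affect the vanishing of the coefficient of $T^n$ for $n\ge d$. On the CRT component $(\mathbb{F}_q[x]/P)^\times$, for $P\mid f_0$, your character is $\psi^{e_P}\circ N$, which is nontrivial exactly when $m\nmid e_P$. The Kronecker reduction at the start of Step~2 is also fine.

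The gap is in how you propose to obtain $|S_k|\le Cq^{k/2}$. There are two problems.

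\emph{First}, both of your approaches silently need more than ``$f$ is not an $m$-th power''. They need $\gcd(m,\{e_P\})=1$, i.e.\ that $f$ is not an $m'$-th power for any divisor $m'>1$ of $m$; equivalently, that $y^m=f(x)$ is absolutely irreducible. For composite $m$ this can fail under the hypotheses of the theorem.
\begin{itemize}
\item Take $m=4$ and $f=h^2$ with $h$ squarefree. Then $y^4-h^2=(y^2-h)(y^2+h)$, so $N_k\approx 2q^k$ and the curve estimate $|N_k-q^k|\le Cq^{k/2}$ is false.
\item In the same example, $f(c)^{(q^k-1)/4}=h(c)^{(q^k-1)/2}\in\{0,\pm1\}$. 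Hence the fibres for $\zeta=\pm i$ are empty, and no fibrewise Stepanov bound $|N_\zeta-q^k/m|\le Cq^{k/2}$ can hold.
\end{itemize}
The remedy is a preliminary reduction. Write $f=h^g$ with $g=\gcd(m,\{e_P\})$ and $h$ monic. Then $\psi(f(c))=\psi^g(h(c))$, so replace $(\psi,f)$ by $(\psi^g,h)$. Now $\psi^g$ has order $m/g>1$, and $h$ has the same $d$ distinct roots. Moreover $h$ is not an $m'$-th power for any $m'>1$ dividing $m/g$. Therefore every $L(\psi',T)$ with $\psi'\ne1$, $\psi'^{m/g}=1$ is a polynomial of degree at most $d-1$, and $y^{m/g}=h(x)$ is absolutely irreducible.

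\emph{Second}, even after this reduction, counting points on the curve over $\mathbb{F}_{q^k}$ bounds only $\sum_{\psi'}S_k(\psi')$ over all nontrivial $\psi'$ of order dividing $m/g$. It does not bound $S_k(\psi)$ itself, so your reduction (``it is enough to bound $|S_k|$'') does not apply directly. Apply it instead to the union, with multiplicities, of the reciprocal roots of all the polynomials $L(\psi',T)$.
\begin{itemize}
\item Their $k$-th power sums equal $q^k-N_k=O(q^{k/2})$.
\item Coinciding roots add with positive multiplicity, so there is no cancellation.
\item Hence the Dirichlet argument, or holomorphy of $\sum_\omega \omega T/(1-\omega T)$ in $|T|<q^{-1/2}$, gives $|\omega|\le q^{1/2}$ for every root, in particular for the roots of $L(\psi,T)$.
\end{itemize}

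With these two repairs the argument is complete, modulo the Riemann hypothesis for curves or the Stepanov--Schmidt bound. That is the genuinely deep input in any case, and the paper likewise takes it from the literature. For prime $m$, including the quadratic case the paper actually uses, the first issue does not arise, since ``not an $m$-th power'' already means $\gcd(m,\{e_P\})=1$.
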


\subsection{Non-representability of prime numbers by quadratic forms}

In this paper, we will also need the following statement.

\begin{theorem}
\label{th:square_forms}
Let $p$ be a prime number such that $p \equiv 1 \pmod{20}$. We introduce the following notation: let $\alpha$ be a primitive element of the field $\mathbb{F}_p$, and $h = \alpha^{\frac{p-1}{5}}$. The following conditions are equivalent:
\begin{enumerate}
\item $\chi(1+h) = -1$;
\item $5 \notin \langle \alpha \rangle^4$;
\item there exist no $(x, y) \in \mathbb{Z}^2$ such that $p = x^2 + 20y^2$;
\item there exist no $(x, y) \in \mathbb{Z}^2$ such that $p = x^2 + 100y^2$.
\end{enumerate} 
\end{theorem}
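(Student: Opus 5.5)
We chain the equivalences (1)$\Leftrightarrow$(2)$\Leftrightarrow$(3)$\Leftrightarrow$(4). Throughout, $h$ is a primitive fifth root of unity in $\mathbb{F}_p$, and $\langle\alpha\rangle^4$ is the subgroup of fourth powers.

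For (1)$\Leftrightarrow$(2) we use the Gauss sum $g=h+h^4-h^2-h^3$, which satisfies $g^2=5$. From $1+h+h^2+h^3+h^4=0$ we get $g=1+2(h+h^4)$, so $h+h^4=(g-1)/2$. We then compute $(1+h)(1+h^4)=2+h+h^4=(3+g)/2$. Since $\chi(h)=1$ ($h$ is a fifth power, and $p-1$ is divisible by $10$, so $h$ is a square), we have $\chi(1+h)=\chi(1+h^4)$. Hence $\chi(1+h)$ is not directly the symbol of $(3+g)/2$, and we need another identity. Instead we write $1+h=h^{1/2}\cdot(h^{1/2}+h^{-1/2})$, taking the square root $h^{1/2}=h^3$ (because $h^6=h$). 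This gives $1+h=h^3(h^{-3}+h^{-2})$, and one checks $h^{-2}+h^{-3}=h^3+h^2=-(g+1)/2$. Since $h^3$ is a square, $\chi(1+h)=\chi\bigl(-(1+g)/2\bigr)=\chi\bigl((1+g)/2\bigr)$, using that $\chi(-1)=1$ because $4\mid p-1$.

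Next, $(1+g)/2\cdot(1-g)/2=(1-5)/4=-1$, so the two conjugates have equal character. Their product relates to the question of whether $g=\sqrt5$ is a square. Indeed $\bigl((1+g)/2\bigr)^2=(3+g)/2$, which is not immediately useful; instead use $g\cdot\bigl((1+g)/2\bigr)^2=(5+3g)/2\cdot$\dots. The cleaner route is to identify $\chi\bigl((1+g)/2\bigr)$ with the quartic residue symbol of $5$, i.e.\ $\chi(\sqrt5)$ up to a fixed sign. We will verify the identity $\bigl(\tfrac{1+g}{2}\bigr)^{?}\cdot$(square)$=g$ by a direct computation in $\mathbb{Z}[h]$. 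The fallback is the classical fact that $\chi(\varepsilon)$, for the fundamental unit $\varepsilon=(1+\sqrt5)/2$, equals $(5/p)_4$ when $p\equiv1\pmod{20}$. Since $5$ is a fourth power iff $\sqrt5$ is a square, this is condition (2). This unit-character step is the main obstacle, and we would settle it via a known explicit relation such as $\sqrt5\,\varepsilon=(\text{something})^2$ in $\mathbb{Z}[\zeta_{20}]$.

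For (2)$\Leftrightarrow$(3) and (3)$\Leftrightarrow$(4) we invoke classical results on quartic residuacity (Lehmer / Barrucand–Cohn type). Write $p=a^2+b^2$. Since $p\equiv1\pmod 5$, $5\mid ab$, and we may take $5\mid b$. The criterion $(5/p)_4=1$ iff $10\mid b$ is Lehmer's rational quartic reciprocity law, which we cite. Then $p=x^2+100y^2$ iff $b\equiv0\pmod{10}$, with $x$ odd, and this gives (2)$\Leftrightarrow$(4).

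For (3)$\Leftrightarrow$(4): $x^2+100y^2=x^2+20(\sqrt5y)^2$, but more usefully, if $p=x^2+20y^2$, then reducing modulo $5$ and using $p\equiv1\pmod5$ gives $x^2\equiv1$. Combining with the representation $p=a^2+b^2$ (unique up to order and signs) and $b=2\sqrt5 y$-type constraints yields $5\mid y$, so $p=x^2+100(y/5)^2$. Concretely, from $p=x^2+4\cdot5y^2$ the sum-of-two-squares representation forces $2\sqrt5y$ to be an integer square-root part, so $5\mid y$. The converse direction is immediate.
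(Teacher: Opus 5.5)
\textbf{The step (3)$\Leftrightarrow$(4) fails, so condition (3) is never connected to the others.}

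\begin{itemize}
\item $p=x^2+20y^2$ is not a representation of $p$ as a sum of two integer squares, because $20y^2$ is never a nonzero square. Uniqueness of $p=a^2+b^2$ therefore gives you nothing.
\item The conclusion $5\mid y$ is false: $101=9^2+20\cdot 1^2$ with $101\equiv 1\pmod{20}$.
\item Even granting $5\mid y$, you would get $p=x^2+500(y/5)^2$, not $x^2+100(y/5)^2$.
\item The ``immediate'' converse is not immediate either, since $100y^2=20\cdot 5y^2$ and $5y^2$ is not a square. Indeed neither form arises from the other by an integral linear substitution, because their discriminants $-80$ and $-400$ differ by the non-square factor $5$.
\end{itemize}

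That a prime $p\equiv1\pmod{20}$ is represented by both forms or by neither is a genuinely nontrivial Kaplansky-type fact. It is Brink's Theorem~1, which the paper simply cites. The paper proves none of the four equivalences itself: it quotes Hanaki--Kobayashi--Munemasa for (1)$\Leftrightarrow$(2), Hasse for (2)$\Leftrightarrow$(4), and Brink for (3)$\Leftrightarrow$(4).

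\textbf{The other two links.}
\begin{itemize}
\item (2)$\Leftrightarrow$(4): your use of Lehmer's criterion is fine.
\item (1)$\Leftrightarrow$(2): your reduction $\chi(1+h)=\chi\bigl(\tfrac{1+g}{2}\bigr)$ is correct. The identity you were looking for is $g\cdot\tfrac{1+g}{2}=\tfrac{5+g}{2}=-(h-h^4)^2$. Together with $\chi(-1)=1$, it gives $\chi(1+h)=\chi(g)$ at once.
\item A minor correction: $\chi(h)=1$ because $h=(h^3)^2$, not because $h$ is a fifth power.
\end{itemize}

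\textbf{Closing the gap without Brink.} You can prove (2)$\Leftrightarrow$(3) directly.

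\begin{itemize}
\item \emph{Setup.} Every such $p$ is $a^2+5b^2$, hence equals $x^2+20y^2$ or $4x^2+5y^2$ with $y$ odd, as in the proof of Proposition~\ref{th:infinity}. In both cases $\gcd(x,y)=1$. Fix $\iota\in\mathbb{F}_p$ with $\iota^2=-1$. Then $\pm g=\iota x/(2y)$, respectively $\pm g=2\iota x/y$. Since $2\iota=(1+\iota)^2$, you get $\chi(g)=\chi(x)\chi(y)$ in both cases.
\item \emph{Reciprocity.} Jacobi reciprocity (using $p\equiv1\pmod4$) gives $\chi(y')=1$ for the odd part $y'$ of $y$. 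It also gives $\chi(x')=\left(\frac{x'}{5}\right)$ for the odd part $x'$ of $x$.
\item \emph{First form.} Here $x$ is odd and $x\equiv\pm1\pmod5$. If $y$ is even, then $p\equiv1\pmod8$, so $\chi(2)=1$. Hence $\chi(g)=1$.
\item \emph{Second form.} Here $x\equiv\pm2\pmod5$. Writing $x=2^kx'$ gives $\chi(x)=\chi(2)^k(-1)^{k+1}$. If $k\ge1$, then $p\equiv5\pmod8$, so $\chi(2)=-1$. Hence $\chi(g)=-1$.
\end{itemize}

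So $p=x^2+20y^2$ exactly when $5$ is a fourth power, which is (2)$\Leftrightarrow$(3). This replaces both the invalid argument and the citation to Brink.
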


The equivalence of conditions (1) and (2) was proved by A.~Hanaki, K.~Kobayashi, and A.~Munemasa in~\cite[lemma~4.5]{hanaki_3-designs_2026}. The equivalence of conditions (3) and (4) was proved by D.~Brink in~\cite[theorem~1]{brink_five_2009}. The equivalence of conditions (2) and (4) was proved by H.~Hasse in~\cite[p.~69]{hasse_bericht_1965}. The formulation of Theorem~\ref{th:square_forms} itself and the references to the sources are borrowed from~\cite{hanaki_3-designs_2026}.

\subsection{Strongly regular graphs}

\begin{definition}
A \textit{strongly regular graph} with parameters $(v, k, \lambda, \mu)$ is a graph on $v$ vertices for which the following conditions hold:
\begin{enumerate}
\item the degree of each vertex is $k$;
\item for any two adjacent vertices $x_1$ and $x_2$, there are exactly $\lambda$ paths of length two from $x_1$ to $x_2$;
\item for any two non-adjacent vertices $x_1$ and $x_2$, there are exactly $\mu$ paths of length two from $x_1$ to $x_2$.
\end{enumerate}
\end{definition}

Instead of the long phrase ``a strongly regular graph with parameters $(v, k, \lambda, \mu)$'', the following shorter notation is often used: $\text{srg}(v, k, \lambda, \mu)$. As an example of a strongly regular graph, one can mention the Petersen graph; it is an $\text{srg}(10, 3, 0, 1)$.

\subsection{Paley graphs}

Let $q$ be a prime power such that $q \equiv 1 \pmod 4$. The \textit{Paley graph $P(q)$} of order $q$ is a graph whose vertices are the elements of the field $\mathbb{F}_q$, and two vertices $x_1$ and $x_2$ are connected by an edge if and only if $\chi(x_1 - x_2) = 1$. Paley graphs are undirected because $\chi(x_1-x_2) = \chi(x_2-x_1)$ when $q \equiv 1 \pmod 4$. The Paley graph $P(q)$ is strongly regular with parameters \mbox{$(q, (q - 1)/2, (q - 5)/4, (q - 1)/4)$}.

\subsection{Two-graphs}

Two-graphs are not graphs in the usual sense of the term, but rather incidence structures.

\begin{definition}
A \textit{two-graph} $\Phi$ is an ordered pair $(V, T)$, where $V$ is a set of vertices and $T$ is a collection of three-element subsets (called triples) of the set $V$, such that every four-element subset of $V$ contains (as subsets) an even number of triples from~$T$.

A two-graph is called \textit{regular} if every pair of vertices is contained in the same number of triples.
\end{definition}

Any simple undirected graph $G = (V, E)$ uniquely generates a two-graph on the same vertex set: a triple of vertices is included in $T$ if it induces a subgraph in $G$ with an odd number of edges (one or three edges). 

A central tool in the theory of two-graphs is the operation of Seidel switching.

\begin{definition}
Let $G = (V, E)$ be a graph, and $W \subset V$ be an arbitrary subset of its vertices. \textit{Seidel switching} with respect to $W$ creates a new graph $G'$ in which the adjacency between any vertex in $W$ and any vertex in the complement $V \setminus W$ is reversed (edges become non-edges, and non-edges become edges). The adjacency of pairs of vertices entirely contained within $W$ or within $V \setminus W$ remains unchanged.

Two graphs are called \textit{Seidel equivalent} (or belonging to the same switching class) if one can be obtained from the other by applying this operation.
\end{definition}

An important property of the Seidel switching operation is the following fact (see, e.g.,~\cite{cameron_designs_2007}).

\begin{theorem}
Two graphs generate the same two-graph if and only if they are Seidel equivalent. Thus, a two-graph can be considered as an invariant of a switching class of graphs.
\end{theorem}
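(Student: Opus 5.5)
The statement is the classical fact that two graphs on the same vertex set $V$ determine the same two-graph exactly when one is a Seidel switch of the other. I would encode graphs by $\{0,1\}$-valued symmetric functions: write $e_G(x,y)=1$ if $xy\in E$ and $0$ otherwise, with addition modulo $2$. The triple $\{x,y,z\}$ lies in $T_G$ iff $e_G(x,y)+e_G(y,z)+e_G(x,z)\equiv 1$. Switching with respect to $W$ replaces $e_G(x,y)$ by $e_G(x,y)+\mathbf 1_W(x)+\mathbf 1_W(y)$, because adjacency is flipped exactly when one endpoint lies in $W$ and the other does not.

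For the direction \emph{switching preserves the two-graph}, sum the new values over the three pairs of a triple. Each vertex appears in exactly two of the pairs, so the correction terms contribute $2(\mathbf 1_W(x)+\mathbf 1_W(y)+\mathbf 1_W(z))\equiv 0$. The parity of the edge count is therefore unchanged, and $T_{G'}=T_G$.

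For the converse, let $G,H$ on $V$ have the same triple set, and set $d(x,y)=e_G(x,y)+e_H(x,y)$ mod $2$. Equality of the triple sets means $d(x,y)+d(y,z)+d(x,z)\equiv 0$ for all distinct $x,y,z$.
\begin{enumerate}
\item If $V$ is empty there is nothing to prove. Otherwise fix $v\in V$ and put $W=\{x\neq v : d(v,x)=1\}$.
\item Define $f(v)=0$ and $f(x)=d(v,x)$ for $x\ne v$. For $x,y\neq v$, the triple identity gives $d(x,y)=d(v,x)+d(v,y)=f(x)+f(y)$. For $y=v$, $d(x,v)=f(x)=f(x)+f(v)$ holds directly.
\item Hence $e_H=e_G+\mathbf 1_W(x)+\mathbf 1_W(y)$ on every pair, which says precisely that $H$ is obtained from $G$ by Seidel switching with respect to $W$.
\end{enumerate}

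The final sentence of the statement is a reformulation: the map from a switching class to its two-graph is well defined and injective. It is also surjective, since in a two-graph the function $t(x,y,z)=[\{x,y,z\}\in T]$ satisfies the even-on-4-sets condition. Fixing $v$ and defining $e(v,x)=0$ and $e(x,y)=t(v,x,y)$, the 4-set condition on $\{v,x,y,z\}$ shows that this graph generates $T$.

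No step is a real obstacle. The only points needing care are the parity bookkeeping and checking the pairs that contain $v$ separately in the converse, which is done in step 2.
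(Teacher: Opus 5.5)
Your proof is correct and complete, including the pairs through $v$ and the degenerate cases $|V|\le 2$. The paper does not prove this statement; it quotes it as a standard fact with a reference to Cameron. Your argument is the standard one. Your $W=\{x\neq v : d(v,x)=1\}$ is exactly $N_G(v)\triangle N_H(v)$, so it amounts to switching $G$ and $H$ so that the base vertex $v$ becomes isolated in both. After that, every remaining adjacency is forced by the two-graph via $x\sim y \iff \{v,x,y\}\in T$, which is the same normal form you use for surjectivity.

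The surjectivity argument goes beyond what the statement asks, but it is correct. It upgrades ``invariant'' to a bijection between switching classes and two-graphs on $V$. In $\mathbb{F}_2$-cochain language, your three steps are $\delta\delta=0$, $H^1=0$ and $H^2=0$ for the full simplex on $V$.
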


\section{Results}
\label{sec:main}

\subsection{Partition of Paley graphs into Petersen graphs}

Let $\mathbb{F}_q$ be a finite field with $q$ elements, where $q$ is a power of a prime $p$. Additionally, we require that the congruences $q \equiv 1 \pmod 4$ and $q \equiv 1 \pmod{10}$ hold. That is, $q \equiv 1 \pmod{20}$.

Let $\alpha$ denote a primitive element of the field $\mathbb{F}_q$, and let $h = \alpha^{\frac{q-1}{5}}$. Note that since $q \equiv 1 \pmod{20}$, we have $\chi(h) = 1$. Consider the subgroup $H = \{1, h, h^2, h^3, h^4\}$ of the group $\mathbb{F}_q^{\times}$ and the cosets of this subgroup. Since $\chi(h) = 1$, each coset either consists entirely of quadratic residues (we will call such classes \textit{residue orbits}), or consists entirely of quadratic non-residues (we will call such classes \textit{non-residue orbits}).

The author observed that in the Paley graph $P(41)$, four induced Petersen subgraphs can be identified, as shown in Fig.~\ref{fig:paley41}. Here, the outer ``pentagons'' of the Petersen graphs correspond to the residue orbits, while the inner ``stars'' correspond to the non-residue orbits.

\begin{figure}[htbp]
    \centering
    \includegraphics[width=0.6\textwidth]{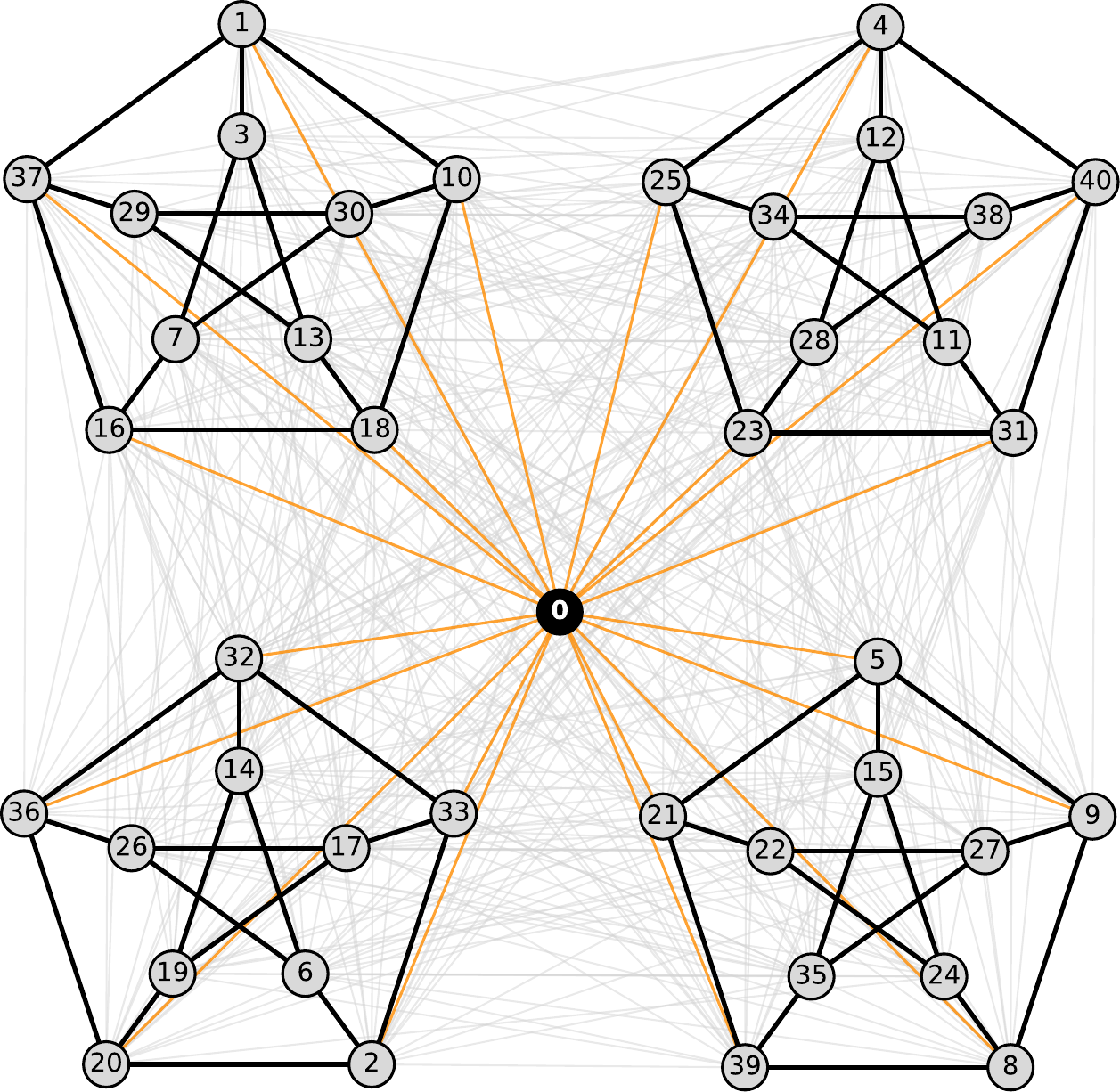}
    \caption{Induced Petersen subgraphs in the graph $P(41)$}
    \label{fig:paley41}
\end{figure}

We call the Paley graph $P(q)$ (for $q \equiv 1 \pmod{20}$) \textit{$G(5, 2)$-partitionable} if the residue orbits described above can be paired with the non-residue orbits such that the induced subgraphs corresponding to these pairs are isomorphic to the Petersen graph. The only vertex not included in the Petersen subgraphs is the zero vertex.

\begin{theorem}
\label{th:paley_petersen}
    The Paley graph $P(q)$ (for $q \equiv 1 \pmod{20}$) is $G(5, 2)$-partitionable if and only if $\chi(1+h) = -1$.
\end{theorem}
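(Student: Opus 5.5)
The plan is to reduce everything to the quadratic characters of a few explicit elements of $\mathbb{F}_q$. Multiplication by a nonzero square is an automorphism of $P(q)$. Such a multiplication permutes the cosets of $H$ and maps residue orbits to residue orbits and non-residue orbits to non-residue orbits.

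\textbf{Step 1: the graph induced on one orbit.} For a coset $aH$ and $i\neq j$, the elements $ah^i$ and $ah^j$ are adjacent if and only if $\chi(a)\chi(1-h^{j-i})=1$, since $\chi(h)=1$. From $1-h^{-k}=-h^{-k}(1-h^k)$ and $\chi(-1)=1$ we get $\chi(1-h^{-k})=\chi(1-h^k)$. Hence $G[aH]$ is the circulant on $\mathbb{Z}_5$ whose connection set is decided by $\varepsilon_1=\chi(1-h)$ and $\varepsilon_2=\chi(1-h^2)$. The key identity is $1-h^2=(1-h)(1+h)$, which gives $\varepsilon_2=\varepsilon_1\chi(1+h)$.

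\textbf{Step 2: necessity.} Suppose $\chi(1+h)=1$. Then $\varepsilon_1=\varepsilon_2$, so every orbit induces $K_5$ or $\overline{K_5}$. The Petersen graph has no triangle and independence number $4$, so neither can sit inside an induced Petersen graph. Thus $P(q)$ is not partitionable.

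Suppose instead $\chi(1+h)=-1$. Then exactly one of $\varepsilon_1,\varepsilon_2$ equals $1$, so every orbit induces a $5$-cycle. On a residue orbit the cycle uses one step ($1$ or $2$). On a non-residue orbit it uses the other step, because of the extra factor $\chi(a)=-1$.

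\textbf{Step 3: sufficiency, reduced to one orbit pair.} Take a residue orbit $aH$ and a non-residue orbit $bH$. Then $ah^i\sim bh^j$ if and only if $\chi(ah^{i-j}-b)=1$. So the bipartite graph between the two orbits is $H$-equivariant, and it is a perfect matching exactly when
\[N(a,b)=\#\{i\in\mathbb{Z}_5:\chi(ah^i-b)=1\}=1.\]
In that case the matching has the form $ah^i\leftrightarrow bh^{i+c}$. The union is then the generalized Petersen graph with steps $1$ and $2$, i.e.\ the Petersen graph.

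Now form the bipartite ``compatibility'' graph $B$ between residue orbits and non-residue orbits, joining $aH$ and $bH$ when $N(a,b)=1$. The quotient group $(\mathbb{F}_q^\times)^2/H$ acts regularly on both sides and preserves $B$, so $B$ is biregular with both sides of size $(q-1)/10$. Hence $B$ is $d$-regular, and if $d\ge1$ then Hall's theorem gives a perfect matching, which is the required pairing. It therefore suffices to find one non-residue $b$ with $N(1,b)=1$.

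\textbf{Step 4: existence via Theorem~\ref{th:weil} (main obstacle).} I would count
\[S=\sum_{x\in\mathbb{F}_q}\frac{1-\chi(x)}{2}\sum_{k=0}^{4}\frac{1+\chi(h^k-x)}{2}\prod_{i\neq k}\frac{1-\chi(h^i-x)}{2},\]
ignoring the $O(1)$ values of $x$ where some factor vanishes. Expanding gives a main term $\tfrac{5}{64}q$. The remaining terms are sums $\sum_x\chi(f(x))$, where $f$ is a product of distinct linear factors among $x, x-1, x-h,\dots,x-h^4$. Each such $f$ is squarefree, hence not a square, so Theorem~\ref{th:weil} bounds each term by $(\deg f-1)\sqrt q\le5\sqrt q$. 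Summing these errors with their weights gives $S\ge \tfrac{5}{64}q-C\sqrt q-C'$ for explicit constants. The result is positive once $q$ exceeds an explicit threshold, of order a few thousand.

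The delicate part is bookkeeping the constant $C$ tightly enough to make the threshold small. The finitely many $q\equiv1\pmod{20}$ below it with $\chi(1+h)=-1$ (e.g.\ $q=41$) would then be checked directly, by exhibiting a $b$ with $N(1,b)=1$ by computer.
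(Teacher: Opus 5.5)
Your proposal is correct and follows essentially the paper's own route: necessity from $1-h^2=(1-h)(1+h)$ together with the Petersen graph being triangle-free with independence number $4$, and sufficiency from a Weil-bound count of a suitable non-residue, transported to every residue orbit by multiplication by squares, with small $q$ handled by computer. The differences are cosmetic: your count is exactly $5N$ for the paper's $N$, since you let the matching edge sit in any of the five positions instead of fixing $1\sim c$. Summing over those positions does collapse the coefficient of a product of $t$ factors $\chi(x-h^i)$ to $\pm(5-2t)/64$, so careful bookkeeping gives $S\ge(5q-245\sqrt q)/64$ and a threshold of $q>49^2$ rather than the paper's $q>315^2$; and Hall's theorem can be skipped, because your regular action already yields the explicit pairing $aH\mapsto abH$.
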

\begin{proof}
Below in the proof, we will use the fact that $q > 315^2$. For smaller field sizes, the validity of the theorem was verified by computer simulation.

Assume that $\chi(1+h) = 1$. Consider the residue orbit $\{1$, $h$, $h^2$, $h^3$, $h^4\}$. Then $\chi(1-h^2) = \chi(1-h)\cdot\chi(1+h) = \chi(1-h)$. That is, for the three vertices $1, h, h^2$ of the Paley graph, the following holds: either these three vertices form a $K_3$, or they form a $\overline{K_3}$. In the first case, the induced subgraph cannot be a Petersen graph, since the Petersen graph contains no triangles. In the second case, since
\begin{equation}
\label{eq:chi_prop}
\chi(h^a - h^{a+b})=\chi(1-h^b),
\end{equation}
\noindent the vertices $1$, $h$, $h^2$, $h^3$, $h^4$ form an independent set. But the independence number of the Petersen graph is $4$, so the induced subgraph containing this orbit is not isomorphic to the Petersen graph. Thus, we have shown that if the graph is $G(5, 2)$-partitionable, then $\chi(1+h) = -1$.

Conversely, suppose that $\chi(1+h) = -1$. Since $\chi(1-h^2) = \chi(1-h)\cdot\chi(1+h) = -\chi(1-h)$, there is an edge from the vertex $1$ to exactly one of the two vertices: $h$ or $h^2$. It follows from relation~\eqref{eq:chi_prop} that in the first case we obtain the cycle $1 - h - h^2 - h^3 - h^4 - 1$, and in the second case, the cycle $1 - h^2 - h^4 - h - h^3 - 1$.

Let $c$ be some quadratic non-residue; consider the non-residue orbit $\{c$, $ch$, $ch^2$, $ch^3$, $ch^4\}$. Note that $\chi(ch^a - ch^b) = -\chi(h^a - h^b)$, therefore the edges within this non-residue orbit are the complements of the edges from the orbit $\{1$, $h$, $h^2$, $h^3$, $h^4\}$. Thus, we automatically obtain the inner ``star'' of the Petersen graph, which is the complement to the outer ``pentagon''. It remains only to find a $c$ such that this ``star'' and ``pentagon'' are connected by five suitable edges.

We seek a quadratic non-residue $c$ such that vertex $1$ is adjacent to vertex $c$ and not adjacent to vertices $ch$, $ch^2$, $ch^3$, $ch^4$. That is,
\begin{align}
\chi(1-c) &= 1,\\
\chi(1-ch^j) &= -1 \; \text{ for } \; j\in\{1, 2, 3, 4\}.
\end{align}
\noindent Note that in this case, there will be exactly one edge from vertex $h^{j}$ to vertex~$ch^{j}$ (for $j\in\{1, 2, 3, 4\}$).

Let us introduce the indicator function $I(c)$, which equals one if $c$ is suitable, and zero otherwise:
\begin{equation}
I(c) = \frac{1 - \chi(c)}{2} \cdot \frac{1 + \chi(1-c)}{2} \cdot \prod_{j=1}^4 \frac{1 - \chi(1-ch^j)}{2}.
\end{equation}

Let us sum the indicators over all elements of the field:
\begin{equation}
N = \sum_{c \in \mathbb{F}_q} I(c).
\end{equation}

Note that expanding the brackets in the indicator $I(c)$ yields the constant term $\frac{1}{64}$ and 63 terms of the form $\pm\frac{1}{64}\chi(P(c))$, where each $P(x)$ is a polynomial of degree at most six. Since the roots of these polynomials are drawn from the set $\{0, 1, h^{-1}, h^{-2}, h^{-3}, h^{-4}\}$ containing pairwise distinct elements, all roots of $P(x)$ have a multiplicity of one. Therefore, no such polynomial is the square of another polynomial. We can represent each $P(x)$ as $a f(x)$, where $a \in \mathbb{F}_q^{\times}$ is the leading coefficient and $f(x)$ is a monic polynomial. By Theorem~\ref{th:weil}, summing these 63 terms over all elements of the field $\mathbb{F}_q$ gives a value not exceeding $\frac{63\cdot 5}{64} q^{1/2}$ in absolute value. Thus, the following inequality holds:
\begin{equation}
N \geq \frac{q}{64} - \frac{315}{64}q^{1/2}.
\end{equation}

It is easy to see that for $q > 315^2$, the value $N$ is greater than zero, meaning that the desired quadratic non-residue $c$ certainly exists (the inequality for $q$ was discussed at the beginning of the proof). With this value of $c$, the residue orbit $\{1, h, h^2, h^3, h^4\}$ together with the non-residue orbit $\{c, ch, ch^2, ch^3, ch^4\}$ form an induced subgraph isomorphic to the Petersen graph.

Note that the found value of $c$ allows us to find a suitable pair for any other residue orbit $\{a, ah, ah^2, ah^3, ah^4\}$, where $\chi(a) = 1$. Such a non-residue orbit would be \{$ca$, $cah$, $cah^2$, $cah^3$, $cah^4$\}. This is true because multiplying the values of all vertices by a quadratic residue does not change the adjacency of the vertices. Thus, it is proved that for $\chi(1 + h) = -1$, the graph $P(q)$ is $G(5, 2)$-partitionable.
\end{proof}

\begin{remark}
Note that the suitable quadratic non-residue $c$ in the proof of the theorem is not unique. For example, for the graph $P(241)$, there are 20 different suitable values of $c$. Moreover, one can obtain 4 different partitions of the graph $P(241)$ into Petersen graphs (five different values of $c$ yield the same partition of orbits into pairs).
\end{remark}

If, under the conditions of Theorem~\ref{th:paley_petersen}, we consider only fields whose number of elements is a prime number, we obtain the following sequence of suitable field sizes: 41, 61, 241, 281, 421, 601, 641, 661, 701, 821, \ldots. Note that this sequence has already appeared in the literature: in~\cite{hanaki_3-designs_2026}, finite fields of these sizes were used to construct $3$-designs; in the On-Line Encyclopedia of Integer Sequences, this sequence has the number A325072 (see~\cite{oeis_a325072}).

\begin{proposition}
\label{th:infinity}
The sequence of sizes of finite fields $q$ satisfying the condition of Theorem~\ref{th:paley_petersen} ($q \equiv 1 \pmod{20}$ and $\chi(1+h) = -1$) is infinite.
\end{proposition}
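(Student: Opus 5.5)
Via Theorem~\ref{th:square_forms}, it is enough to produce infinitely many primes $p \equiv 1 \pmod{20}$ that are \emph{not} of the form $x^2+20y^2$. Condition (1) there is exactly the condition of Theorem~\ref{th:paley_petersen} for $q=p$, and condition (3) turns it into a representability question. So the plan is: (i) restrict to prime $q=p$; (ii) use the equivalence (1)$\Leftrightarrow$(3); (iii) show there are infinitely many primes $p\equiv 1 \pmod{20}$ not represented by $x^2+20y^2$.

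For step (iii) I would use the theory of binary quadratic forms of discriminant $-80$. The reduced forms of discriminant $-80$ are $x^2+20y^2$, $4x^2+5y^2$, $3x^2\pm 2xy+7y^2$, so the class group has order $4$. The genus characters separate $\{x^2+20y^2,\,4x^2+5y^2\}$ from $\{3x^2\pm2xy+7y^2\}$. A prime $p\equiv 1\pmod{20}$ has $\left(\frac{-20}{p}\right)=1$ and lies in the principal genus, so it is represented by exactly one of $x^2+20y^2$ or $4x^2+5y^2$. The target primes are therefore those represented by $4x^2+5y^2$. Since $4x^2+5y^2\equiv 1 \pmod{20}$ is possible (for example $41=4\cdot 4+5\cdot 5$ and $61=4\cdot 9+5\cdot 5$), I would next show that infinitely many primes $p\equiv1\pmod{20}$ are represented by $4x^2+5y^2$. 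Equivalently, such primes split in $\mathbb{Q}(\sqrt{-5})$ and in $\mathbb{Q}(i)$ but are not of the form $x^2+20y^2$, i.e.\ they do not split completely in the ring class field $L$ of the order $\mathbb{Z}[\sqrt{-20}]$. By the Chebotarev density theorem applied to the Galois extension $L(\zeta_{20})/\mathbb{Q}$, the Frobenius classes corresponding to "$p\equiv1 \pmod{20}$ but not split in $L$" form a nonempty union of conjugacy classes, because $41$ already witnesses it. Hence these primes have positive density and in particular are infinite.

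The main obstacle is confirming that the class of $4x^2+5y^2$ (the nontrivial element of the principal genus) actually meets the residue class $1 \pmod{20}$ in the Chebotarev sense, i.e.\ that $L\not\subseteq \mathbb{Q}(\zeta_{20})$. If it were contained, congruences alone would decide representability, which the example $41$ versus $x^2+20y^2$-represented primes like $101=81+20$ rules out. I would first try the concrete check: both $41$ and $101$ are $\equiv 1 \pmod{20}$, yet one is represented and the other is not. This shows the Frobenius in $\mathrm{Gal}(L(\zeta_{20})/\mathbb{Q}(\zeta_{20}))$ takes both values, so Chebotarev (or Dirichlet's theorem for the ring class group via Hecke $L$-functions) gives infinitely many primes of each kind. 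Alternatively, one could avoid class field theory by using condition (4) together with Hasse's quartic-residue criterion (2), but the Chebotarev route seems cleanest.
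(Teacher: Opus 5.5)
Your argument is correct, but it proves the infinitude by a different route from the paper. You share the first step with the paper: both use Theorem~\ref{th:square_forms}, (1)$\Leftrightarrow$(3), restricted to prime $q=p$.

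The paper then stays at the level of forms. Every prime $p\equiv 1\pmod{20}$ is $a^2+5b^2$ with exactly one of $a,b$ even, so $p$ is represented by $x^2+20y^2$ or by $4x^2+5y^2$. It finishes with Meyer's theorem: a properly primitive form representing one number of a progression $r+mk$ with $\gcd(r,m)=1$ represents infinitely many primes in it. This is applied to $4x^2+5y^2$ and $61$.

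You instead use the ring class field criterion ($p=x^2+20y^2$ iff $p$ splits completely in the ring class field $L$ of $\mathbb{Z}[\sqrt{-20}]$) and Chebotarev in $L(\zeta_{20})/\mathbb{Q}$. The prime $41$ shows that the set of Frobenius elements trivial on $\mathbb{Q}(\zeta_{20})$ but nontrivial on $L$ is nonempty. This step produces primes $p\equiv 1\pmod{20}$ not of the form $x^2+20y^2$ directly, so your genus-theory identification of them with $4x^2+5y^2$ is only motivation.

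By contrast, the paper's route genuinely needs the direction ``represented by $4x^2+5y^2$ $\Rightarrow$ not represented by $x^2+20y^2$''. Its parity argument does not supply this; it rests on the fact you state, that a prime is represented by only one of these two ambiguous classes.

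What each buys: the paper's proof is shorter and looks more elementary, but Meyer's theorem is a less standard citation whose proof uses the same $L$-function machinery. Yours uses standard tools and gives more, namely a positive density. In fact it is half of the primes $p\equiv 1\pmod{20}$: $L\cap\mathbb{Q}(\zeta_{20})$ is the genus field $\mathbb{Q}(i,\sqrt{5})$, so $[L(\zeta_{20}):\mathbb{Q}(\zeta_{20})]=2$.

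There is one slip to fix. Your $41=4\cdot 4+5\cdot 5$ and $61=4\cdot 9+5\cdot 5$ are not values of $4x^2+5y^2$ at integers, since they would need $y^2=5$. The correct representations are $41=4\cdot 3^2+5\cdot 1^2$ and $61=4\cdot 2^2+5\cdot 3^2$. For the Chebotarev witness, it is simplest to check directly that $41$ and $41-20=21$ are not squares.

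Also, splitting in $\mathbb{Q}(\sqrt{-5})$ and $\mathbb{Q}(i)$ characterizes $p\equiv 1,9\pmod{20}$, not $p\equiv 1\pmod{20}$, so your ``equivalently'' there is loose. This is harmless, because the Chebotarev step is correctly set up in $L(\zeta_{20})$.
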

\begin{proof}
We will prove the infinity of the set of field sizes that are prime numbers and satisfy the condition $\chi(1+h) = -1$. It follows from Theorem~\ref{th:square_forms} that instead of this, one can prove the infinity of the set of prime numbers $p \equiv 1 \pmod{20}$ that are not representable by the quadratic form $x^2+20y^2$.

It is well known (see, e.g.,~\cite{cox_primes_2013}) that any prime number $p \equiv 1 \pmod{20}$ can be represented in the form $p = a^2+5b^2$. Note that in this representation, either $a$ is even or $b$ is even. In the first case, we obtain a representation of the form $4x^2+5b^2$; in the second, a representation of the form $a^2+20y^2$. Consequently, the non-representability of a prime number $p \equiv 1 \pmod{20}$ by the form $x^2 + 20y^2$ is equivalent to its representability by the form $4x^2 + 5y^2$.

In turn, according to Meyer's theorem (see, e.g.,~\cite{reiner_generalization_1949}), if a properly primitive binary quadratic form represents at least one number in an arithmetic progression whose first term and common difference are coprime, then it represents infinitely many prime numbers in this progression. Since the positive-definite form $4x^2 + 5y^2$ is properly primitive and represents the number 61, which belongs to the progression $p \equiv 1 \pmod{20}$, it generates infinitely many prime numbers of the specified form.
\end{proof}

\begin{remark}
The proof of Proposition~\ref{th:infinity} discusses the infinity of the set of suitable field sizes that are prime numbers. However, it should be noted that there are also suitable extensions of prime fields, for example, $\mathbb{F}_{41^3}$.
\end{remark}

\subsection{Construction of a new graph $\text{srg}(50, 21, 8, 9)$}

The Petersen graph is not the only strongly regular graph that can be obtained by pairing cosets of $\mathbb{F}_q^{\times}$ with respect to a subgroup of a given order. Using computer simulation, it was found that in the graphs $P(13781)$, $P(20021)$, $P(23869)$, etc., by pairing cosets of a subgroup of order 13, one can obtain induced subgraphs isomorphic to $\text{srg}(26, 10, 3, 4)$. It is known that, up to isomorphism, there are 10 strongly regular graphs with parameters $(26, 10, 3, 4)$ (see~\cite{brouwer_srg}). Among these 10~graphs, there is one having the full automorphism group $C_{13}\rtimes C_3$ --- this is precisely the graph obtained as a result of such a pairing of orbits.

To facilitate the search for new strongly regular graphs, a custom computational algorithm was implemented in C++ and accelerated via CUDA technology for parallel processing. The program iterates over Paley graphs to identify instances where the pairing of two cosets of a subgroup of order 25 yields an induced subgraph isomorphic to $\text{srg}(50, 21, 8, 9)$. The computations were executed on an NVIDIA T4 GPU within the Google Colaboratory environment, requiring approximately one to two minutes of execution time. The first graph satisfying these criteria was found to be $P(1117901)$.

As a primitive element of the field $\mathbb{F}_{1117901}$, we took $\alpha = 2$, and $h = \alpha^{\frac{1117901 - 1}{25}}$; the subgroup $H = \{1, h, h^2, \ldots, h^{24}\}$ was considered. Multiplying all elements of the residue orbit containing one by the quadratic non-residue $754583$ yields the paired non-residue orbit. The adjacency matrix of the obtained graph is given below.
\begin{equation}
\resizebox{0.9\textwidth}{!}{$
\left(
\begin{array}{ccccccccccccccccccccccccc|ccccccccccccccccccccccccc}
0 & 0 & 1 & 1 & 0 & 1 & 1 & 1 & 0 & 0 & 0 & 0 & 1 & 1 & 0 & 0 & 0 & 0 & 1 & 1 & 1 & 0 & 1 & 1 & 0 & 0 & 1 & 1 & 0 & 0 & 1 & 0 & 0 & 1 & 1 & 0 & 1 & 0 & 1 & 0 & 0 & 0 & 0 & 0 & 0 & 0 & 0 & 1 & 0 & 1 \\
0 & 0 & 0 & 1 & 1 & 0 & 1 & 1 & 1 & 0 & 0 & 0 & 0 & 1 & 1 & 0 & 0 & 0 & 0 & 1 & 1 & 1 & 0 & 1 & 1 & 1 & 0 & 1 & 1 & 0 & 0 & 1 & 0 & 0 & 1 & 1 & 0 & 1 & 0 & 1 & 0 & 0 & 0 & 0 & 0 & 0 & 0 & 0 & 1 & 0 \\
1 & 0 & 0 & 0 & 1 & 1 & 0 & 1 & 1 & 1 & 0 & 0 & 0 & 0 & 1 & 1 & 0 & 0 & 0 & 0 & 1 & 1 & 1 & 0 & 1 & 0 & 1 & 0 & 1 & 1 & 0 & 0 & 1 & 0 & 0 & 1 & 1 & 0 & 1 & 0 & 1 & 0 & 0 & 0 & 0 & 0 & 0 & 0 & 0 & 1 \\
1 & 1 & 0 & 0 & 0 & 1 & 1 & 0 & 1 & 1 & 1 & 0 & 0 & 0 & 0 & 1 & 1 & 0 & 0 & 0 & 0 & 1 & 1 & 1 & 0 & 1 & 0 & 1 & 0 & 1 & 1 & 0 & 0 & 1 & 0 & 0 & 1 & 1 & 0 & 1 & 0 & 1 & 0 & 0 & 0 & 0 & 0 & 0 & 0 & 0 \\
0 & 1 & 1 & 0 & 0 & 0 & 1 & 1 & 0 & 1 & 1 & 1 & 0 & 0 & 0 & 0 & 1 & 1 & 0 & 0 & 0 & 0 & 1 & 1 & 1 & 0 & 1 & 0 & 1 & 0 & 1 & 1 & 0 & 0 & 1 & 0 & 0 & 1 & 1 & 0 & 1 & 0 & 1 & 0 & 0 & 0 & 0 & 0 & 0 & 0 \\
1 & 0 & 1 & 1 & 0 & 0 & 0 & 1 & 1 & 0 & 1 & 1 & 1 & 0 & 0 & 0 & 0 & 1 & 1 & 0 & 0 & 0 & 0 & 1 & 1 & 0 & 0 & 1 & 0 & 1 & 0 & 1 & 1 & 0 & 0 & 1 & 0 & 0 & 1 & 1 & 0 & 1 & 0 & 1 & 0 & 0 & 0 & 0 & 0 & 0 \\
1 & 1 & 0 & 1 & 1 & 0 & 0 & 0 & 1 & 1 & 0 & 1 & 1 & 1 & 0 & 0 & 0 & 0 & 1 & 1 & 0 & 0 & 0 & 0 & 1 & 0 & 0 & 0 & 1 & 0 & 1 & 0 & 1 & 1 & 0 & 0 & 1 & 0 & 0 & 1 & 1 & 0 & 1 & 0 & 1 & 0 & 0 & 0 & 0 & 0 \\
1 & 1 & 1 & 0 & 1 & 1 & 0 & 0 & 0 & 1 & 1 & 0 & 1 & 1 & 1 & 0 & 0 & 0 & 0 & 1 & 1 & 0 & 0 & 0 & 0 & 0 & 0 & 0 & 0 & 1 & 0 & 1 & 0 & 1 & 1 & 0 & 0 & 1 & 0 & 0 & 1 & 1 & 0 & 1 & 0 & 1 & 0 & 0 & 0 & 0 \\
0 & 1 & 1 & 1 & 0 & 1 & 1 & 0 & 0 & 0 & 1 & 1 & 0 & 1 & 1 & 1 & 0 & 0 & 0 & 0 & 1 & 1 & 0 & 0 & 0 & 0 & 0 & 0 & 0 & 0 & 1 & 0 & 1 & 0 & 1 & 1 & 0 & 0 & 1 & 0 & 0 & 1 & 1 & 0 & 1 & 0 & 1 & 0 & 0 & 0 \\
0 & 0 & 1 & 1 & 1 & 0 & 1 & 1 & 0 & 0 & 0 & 1 & 1 & 0 & 1 & 1 & 1 & 0 & 0 & 0 & 0 & 1 & 1 & 0 & 0 & 0 & 0 & 0 & 0 & 0 & 0 & 1 & 0 & 1 & 0 & 1 & 1 & 0 & 0 & 1 & 0 & 0 & 1 & 1 & 0 & 1 & 0 & 1 & 0 & 0 \\
0 & 0 & 0 & 1 & 1 & 1 & 0 & 1 & 1 & 0 & 0 & 0 & 1 & 1 & 0 & 1 & 1 & 1 & 0 & 0 & 0 & 0 & 1 & 1 & 0 & 0 & 0 & 0 & 0 & 0 & 0 & 0 & 1 & 0 & 1 & 0 & 1 & 1 & 0 & 0 & 1 & 0 & 0 & 1 & 1 & 0 & 1 & 0 & 1 & 0 \\
0 & 0 & 0 & 0 & 1 & 1 & 1 & 0 & 1 & 1 & 0 & 0 & 0 & 1 & 1 & 0 & 1 & 1 & 1 & 0 & 0 & 0 & 0 & 1 & 1 & 0 & 0 & 0 & 0 & 0 & 0 & 0 & 0 & 1 & 0 & 1 & 0 & 1 & 1 & 0 & 0 & 1 & 0 & 0 & 1 & 1 & 0 & 1 & 0 & 1 \\
1 & 0 & 0 & 0 & 0 & 1 & 1 & 1 & 0 & 1 & 1 & 0 & 0 & 0 & 1 & 1 & 0 & 1 & 1 & 1 & 0 & 0 & 0 & 0 & 1 & 1 & 0 & 0 & 0 & 0 & 0 & 0 & 0 & 0 & 1 & 0 & 1 & 0 & 1 & 1 & 0 & 0 & 1 & 0 & 0 & 1 & 1 & 0 & 1 & 0 \\
1 & 1 & 0 & 0 & 0 & 0 & 1 & 1 & 1 & 0 & 1 & 1 & 0 & 0 & 0 & 1 & 1 & 0 & 1 & 1 & 1 & 0 & 0 & 0 & 0 & 0 & 1 & 0 & 0 & 0 & 0 & 0 & 0 & 0 & 0 & 1 & 0 & 1 & 0 & 1 & 1 & 0 & 0 & 1 & 0 & 0 & 1 & 1 & 0 & 1 \\
0 & 1 & 1 & 0 & 0 & 0 & 0 & 1 & 1 & 1 & 0 & 1 & 1 & 0 & 0 & 0 & 1 & 1 & 0 & 1 & 1 & 1 & 0 & 0 & 0 & 1 & 0 & 1 & 0 & 0 & 0 & 0 & 0 & 0 & 0 & 0 & 1 & 0 & 1 & 0 & 1 & 1 & 0 & 0 & 1 & 0 & 0 & 1 & 1 & 0 \\
0 & 0 & 1 & 1 & 0 & 0 & 0 & 0 & 1 & 1 & 1 & 0 & 1 & 1 & 0 & 0 & 0 & 1 & 1 & 0 & 1 & 1 & 1 & 0 & 0 & 0 & 1 & 0 & 1 & 0 & 0 & 0 & 0 & 0 & 0 & 0 & 0 & 1 & 0 & 1 & 0 & 1 & 1 & 0 & 0 & 1 & 0 & 0 & 1 & 1 \\
0 & 0 & 0 & 1 & 1 & 0 & 0 & 0 & 0 & 1 & 1 & 1 & 0 & 1 & 1 & 0 & 0 & 0 & 1 & 1 & 0 & 1 & 1 & 1 & 0 & 1 & 0 & 1 & 0 & 1 & 0 & 0 & 0 & 0 & 0 & 0 & 0 & 0 & 1 & 0 & 1 & 0 & 1 & 1 & 0 & 0 & 1 & 0 & 0 & 1 \\
0 & 0 & 0 & 0 & 1 & 1 & 0 & 0 & 0 & 0 & 1 & 1 & 1 & 0 & 1 & 1 & 0 & 0 & 0 & 1 & 1 & 0 & 1 & 1 & 1 & 1 & 1 & 0 & 1 & 0 & 1 & 0 & 0 & 0 & 0 & 0 & 0 & 0 & 0 & 1 & 0 & 1 & 0 & 1 & 1 & 0 & 0 & 1 & 0 & 0 \\
1 & 0 & 0 & 0 & 0 & 1 & 1 & 0 & 0 & 0 & 0 & 1 & 1 & 1 & 0 & 1 & 1 & 0 & 0 & 0 & 1 & 1 & 0 & 1 & 1 & 0 & 1 & 1 & 0 & 1 & 0 & 1 & 0 & 0 & 0 & 0 & 0 & 0 & 0 & 0 & 1 & 0 & 1 & 0 & 1 & 1 & 0 & 0 & 1 & 0 \\
1 & 1 & 0 & 0 & 0 & 0 & 1 & 1 & 0 & 0 & 0 & 0 & 1 & 1 & 1 & 0 & 1 & 1 & 0 & 0 & 0 & 1 & 1 & 0 & 1 & 0 & 0 & 1 & 1 & 0 & 1 & 0 & 1 & 0 & 0 & 0 & 0 & 0 & 0 & 0 & 0 & 1 & 0 & 1 & 0 & 1 & 1 & 0 & 0 & 1 \\
1 & 1 & 1 & 0 & 0 & 0 & 0 & 1 & 1 & 0 & 0 & 0 & 0 & 1 & 1 & 1 & 0 & 1 & 1 & 0 & 0 & 0 & 1 & 1 & 0 & 1 & 0 & 0 & 1 & 1 & 0 & 1 & 0 & 1 & 0 & 0 & 0 & 0 & 0 & 0 & 0 & 0 & 1 & 0 & 1 & 0 & 1 & 1 & 0 & 0 \\
0 & 1 & 1 & 1 & 0 & 0 & 0 & 0 & 1 & 1 & 0 & 0 & 0 & 0 & 1 & 1 & 1 & 0 & 1 & 1 & 0 & 0 & 0 & 1 & 1 & 0 & 1 & 0 & 0 & 1 & 1 & 0 & 1 & 0 & 1 & 0 & 0 & 0 & 0 & 0 & 0 & 0 & 0 & 1 & 0 & 1 & 0 & 1 & 1 & 0 \\
1 & 0 & 1 & 1 & 1 & 0 & 0 & 0 & 0 & 1 & 1 & 0 & 0 & 0 & 0 & 1 & 1 & 1 & 0 & 1 & 1 & 0 & 0 & 0 & 1 & 0 & 0 & 1 & 0 & 0 & 1 & 1 & 0 & 1 & 0 & 1 & 0 & 0 & 0 & 0 & 0 & 0 & 0 & 0 & 1 & 0 & 1 & 0 & 1 & 1 \\
1 & 1 & 0 & 1 & 1 & 1 & 0 & 0 & 0 & 0 & 1 & 1 & 0 & 0 & 0 & 0 & 1 & 1 & 1 & 0 & 1 & 1 & 0 & 0 & 0 & 1 & 0 & 0 & 1 & 0 & 0 & 1 & 1 & 0 & 1 & 0 & 1 & 0 & 0 & 0 & 0 & 0 & 0 & 0 & 0 & 1 & 0 & 1 & 0 & 1 \\
0 & 1 & 1 & 0 & 1 & 1 & 1 & 0 & 0 & 0 & 0 & 1 & 1 & 0 & 0 & 0 & 0 & 1 & 1 & 1 & 0 & 1 & 1 & 0 & 0 & 1 & 1 & 0 & 0 & 1 & 0 & 0 & 1 & 1 & 0 & 1 & 0 & 1 & 0 & 0 & 0 & 0 & 0 & 0 & 0 & 0 & 1 & 0 & 1 & 0 \\ \hline
0 & 1 & 0 & 1 & 0 & 0 & 0 & 0 & 0 & 0 & 0 & 0 & 1 & 0 & 1 & 0 & 1 & 1 & 0 & 0 & 1 & 0 & 0 & 1 & 1 & 0 & 1 & 0 & 0 & 1 & 0 & 0 & 0 & 1 & 1 & 1 & 1 & 0 & 0 & 1 & 1 & 1 & 1 & 0 & 0 & 0 & 1 & 0 & 0 & 1 \\
1 & 0 & 1 & 0 & 1 & 0 & 0 & 0 & 0 & 0 & 0 & 0 & 0 & 1 & 0 & 1 & 0 & 1 & 1 & 0 & 0 & 1 & 0 & 0 & 1 & 1 & 0 & 1 & 0 & 0 & 1 & 0 & 0 & 0 & 1 & 1 & 1 & 1 & 0 & 0 & 1 & 1 & 1 & 1 & 0 & 0 & 0 & 1 & 0 & 0 \\
1 & 1 & 0 & 1 & 0 & 1 & 0 & 0 & 0 & 0 & 0 & 0 & 0 & 0 & 1 & 0 & 1 & 0 & 1 & 1 & 0 & 0 & 1 & 0 & 0 & 0 & 1 & 0 & 1 & 0 & 0 & 1 & 0 & 0 & 0 & 1 & 1 & 1 & 1 & 0 & 0 & 1 & 1 & 1 & 1 & 0 & 0 & 0 & 1 & 0 \\
0 & 1 & 1 & 0 & 1 & 0 & 1 & 0 & 0 & 0 & 0 & 0 & 0 & 0 & 0 & 1 & 0 & 1 & 0 & 1 & 1 & 0 & 0 & 1 & 0 & 0 & 0 & 1 & 0 & 1 & 0 & 0 & 1 & 0 & 0 & 0 & 1 & 1 & 1 & 1 & 0 & 0 & 1 & 1 & 1 & 1 & 0 & 0 & 0 & 1 \\
0 & 0 & 1 & 1 & 0 & 1 & 0 & 1 & 0 & 0 & 0 & 0 & 0 & 0 & 0 & 0 & 1 & 0 & 1 & 0 & 1 & 1 & 0 & 0 & 1 & 1 & 0 & 0 & 1 & 0 & 1 & 0 & 0 & 1 & 0 & 0 & 0 & 1 & 1 & 1 & 1 & 0 & 0 & 1 & 1 & 1 & 1 & 0 & 0 & 0 \\
1 & 0 & 0 & 1 & 1 & 0 & 1 & 0 & 1 & 0 & 0 & 0 & 0 & 0 & 0 & 0 & 0 & 1 & 0 & 1 & 0 & 1 & 1 & 0 & 0 & 0 & 1 & 0 & 0 & 1 & 0 & 1 & 0 & 0 & 1 & 0 & 0 & 0 & 1 & 1 & 1 & 1 & 0 & 0 & 1 & 1 & 1 & 1 & 0 & 0 \\
0 & 1 & 0 & 0 & 1 & 1 & 0 & 1 & 0 & 1 & 0 & 0 & 0 & 0 & 0 & 0 & 0 & 0 & 1 & 0 & 1 & 0 & 1 & 1 & 0 & 0 & 0 & 1 & 0 & 0 & 1 & 0 & 1 & 0 & 0 & 1 & 0 & 0 & 0 & 1 & 1 & 1 & 1 & 0 & 0 & 1 & 1 & 1 & 1 & 0 \\
0 & 0 & 1 & 0 & 0 & 1 & 1 & 0 & 1 & 0 & 1 & 0 & 0 & 0 & 0 & 0 & 0 & 0 & 0 & 1 & 0 & 1 & 0 & 1 & 1 & 0 & 0 & 0 & 1 & 0 & 0 & 1 & 0 & 1 & 0 & 0 & 1 & 0 & 0 & 0 & 1 & 1 & 1 & 1 & 0 & 0 & 1 & 1 & 1 & 1 \\
1 & 0 & 0 & 1 & 0 & 0 & 1 & 1 & 0 & 1 & 0 & 1 & 0 & 0 & 0 & 0 & 0 & 0 & 0 & 0 & 1 & 0 & 1 & 0 & 1 & 1 & 0 & 0 & 0 & 1 & 0 & 0 & 1 & 0 & 1 & 0 & 0 & 1 & 0 & 0 & 0 & 1 & 1 & 1 & 1 & 0 & 0 & 1 & 1 & 1 \\
1 & 1 & 0 & 0 & 1 & 0 & 0 & 1 & 1 & 0 & 1 & 0 & 1 & 0 & 0 & 0 & 0 & 0 & 0 & 0 & 0 & 1 & 0 & 1 & 0 & 1 & 1 & 0 & 0 & 0 & 1 & 0 & 0 & 1 & 0 & 1 & 0 & 0 & 1 & 0 & 0 & 0 & 1 & 1 & 1 & 1 & 0 & 0 & 1 & 1 \\
0 & 1 & 1 & 0 & 0 & 1 & 0 & 0 & 1 & 1 & 0 & 1 & 0 & 1 & 0 & 0 & 0 & 0 & 0 & 0 & 0 & 0 & 1 & 0 & 1 & 1 & 1 & 1 & 0 & 0 & 0 & 1 & 0 & 0 & 1 & 0 & 1 & 0 & 0 & 1 & 0 & 0 & 0 & 1 & 1 & 1 & 1 & 0 & 0 & 1 \\
1 & 0 & 1 & 1 & 0 & 0 & 1 & 0 & 0 & 1 & 1 & 0 & 1 & 0 & 1 & 0 & 0 & 0 & 0 & 0 & 0 & 0 & 0 & 1 & 0 & 1 & 1 & 1 & 1 & 0 & 0 & 0 & 1 & 0 & 0 & 1 & 0 & 1 & 0 & 0 & 1 & 0 & 0 & 0 & 1 & 1 & 1 & 1 & 0 & 0 \\
0 & 1 & 0 & 1 & 1 & 0 & 0 & 1 & 0 & 0 & 1 & 1 & 0 & 1 & 0 & 1 & 0 & 0 & 0 & 0 & 0 & 0 & 0 & 0 & 1 & 0 & 1 & 1 & 1 & 1 & 0 & 0 & 0 & 1 & 0 & 0 & 1 & 0 & 1 & 0 & 0 & 1 & 0 & 0 & 0 & 1 & 1 & 1 & 1 & 0 \\
1 & 0 & 1 & 0 & 1 & 1 & 0 & 0 & 1 & 0 & 0 & 1 & 1 & 0 & 1 & 0 & 1 & 0 & 0 & 0 & 0 & 0 & 0 & 0 & 0 & 0 & 0 & 1 & 1 & 1 & 1 & 0 & 0 & 0 & 1 & 0 & 0 & 1 & 0 & 1 & 0 & 0 & 1 & 0 & 0 & 0 & 1 & 1 & 1 & 1 \\
0 & 1 & 0 & 1 & 0 & 1 & 1 & 0 & 0 & 1 & 0 & 0 & 1 & 1 & 0 & 1 & 0 & 1 & 0 & 0 & 0 & 0 & 0 & 0 & 0 & 1 & 0 & 0 & 1 & 1 & 1 & 1 & 0 & 0 & 0 & 1 & 0 & 0 & 1 & 0 & 1 & 0 & 0 & 1 & 0 & 0 & 0 & 1 & 1 & 1 \\
0 & 0 & 1 & 0 & 1 & 0 & 1 & 1 & 0 & 0 & 1 & 0 & 0 & 1 & 1 & 0 & 1 & 0 & 1 & 0 & 0 & 0 & 0 & 0 & 0 & 1 & 1 & 0 & 0 & 1 & 1 & 1 & 1 & 0 & 0 & 0 & 1 & 0 & 0 & 1 & 0 & 1 & 0 & 0 & 1 & 0 & 0 & 0 & 1 & 1 \\
0 & 0 & 0 & 1 & 0 & 1 & 0 & 1 & 1 & 0 & 0 & 1 & 0 & 0 & 1 & 1 & 0 & 1 & 0 & 1 & 0 & 0 & 0 & 0 & 0 & 1 & 1 & 1 & 0 & 0 & 1 & 1 & 1 & 1 & 0 & 0 & 0 & 1 & 0 & 0 & 1 & 0 & 1 & 0 & 0 & 1 & 0 & 0 & 0 & 1 \\
0 & 0 & 0 & 0 & 1 & 0 & 1 & 0 & 1 & 1 & 0 & 0 & 1 & 0 & 0 & 1 & 1 & 0 & 1 & 0 & 1 & 0 & 0 & 0 & 0 & 1 & 1 & 1 & 1 & 0 & 0 & 1 & 1 & 1 & 1 & 0 & 0 & 0 & 1 & 0 & 0 & 1 & 0 & 1 & 0 & 0 & 1 & 0 & 0 & 0 \\
0 & 0 & 0 & 0 & 0 & 1 & 0 & 1 & 0 & 1 & 1 & 0 & 0 & 1 & 0 & 0 & 1 & 1 & 0 & 1 & 0 & 1 & 0 & 0 & 0 & 0 & 1 & 1 & 1 & 1 & 0 & 0 & 1 & 1 & 1 & 1 & 0 & 0 & 0 & 1 & 0 & 0 & 1 & 0 & 1 & 0 & 0 & 1 & 0 & 0 \\
0 & 0 & 0 & 0 & 0 & 0 & 1 & 0 & 1 & 0 & 1 & 1 & 0 & 0 & 1 & 0 & 0 & 1 & 1 & 0 & 1 & 0 & 1 & 0 & 0 & 0 & 0 & 1 & 1 & 1 & 1 & 0 & 0 & 1 & 1 & 1 & 1 & 0 & 0 & 0 & 1 & 0 & 0 & 1 & 0 & 1 & 0 & 0 & 1 & 0 \\
0 & 0 & 0 & 0 & 0 & 0 & 0 & 1 & 0 & 1 & 0 & 1 & 1 & 0 & 0 & 1 & 0 & 0 & 1 & 1 & 0 & 1 & 0 & 1 & 0 & 0 & 0 & 0 & 1 & 1 & 1 & 1 & 0 & 0 & 1 & 1 & 1 & 1 & 0 & 0 & 0 & 1 & 0 & 0 & 1 & 0 & 1 & 0 & 0 & 1 \\
0 & 0 & 0 & 0 & 0 & 0 & 0 & 0 & 1 & 0 & 1 & 0 & 1 & 1 & 0 & 0 & 1 & 0 & 0 & 1 & 1 & 0 & 1 & 0 & 1 & 1 & 0 & 0 & 0 & 1 & 1 & 1 & 1 & 0 & 0 & 1 & 1 & 1 & 1 & 0 & 0 & 0 & 1 & 0 & 0 & 1 & 0 & 1 & 0 & 0 \\
1 & 0 & 0 & 0 & 0 & 0 & 0 & 0 & 0 & 1 & 0 & 1 & 0 & 1 & 1 & 0 & 0 & 1 & 0 & 0 & 1 & 1 & 0 & 1 & 0 & 0 & 1 & 0 & 0 & 0 & 1 & 1 & 1 & 1 & 0 & 0 & 1 & 1 & 1 & 1 & 0 & 0 & 0 & 1 & 0 & 0 & 1 & 0 & 1 & 0 \\
0 & 1 & 0 & 0 & 0 & 0 & 0 & 0 & 0 & 0 & 1 & 0 & 1 & 0 & 1 & 1 & 0 & 0 & 1 & 0 & 0 & 1 & 1 & 0 & 1 & 0 & 0 & 1 & 0 & 0 & 0 & 1 & 1 & 1 & 1 & 0 & 0 & 1 & 1 & 1 & 1 & 0 & 0 & 0 & 1 & 0 & 0 & 1 & 0 & 1 \\
1 & 0 & 1 & 0 & 0 & 0 & 0 & 0 & 0 & 0 & 0 & 1 & 0 & 1 & 0 & 1 & 1 & 0 & 0 & 1 & 0 & 0 & 1 & 1 & 0 & 1 & 0 & 0 & 1 & 0 & 0 & 0 & 1 & 1 & 1 & 1 & 0 & 0 & 1 & 1 & 1 & 1 & 0 & 0 & 0 & 1 & 0 & 0 & 1 & 0
\end{array}
\right)
$}
\end{equation}

The full automorphism group of the found graph was computed using the GAP system~\cite{GAP4} and is isomorphic to the group~$C_{25}\rtimes C_{4}$; the order of the group is 100.

Strongly regular graphs with the parameter set $(50, 21, 8, 9)$ have already been found. However, the graphs with such a parameter set mentioned on the website~\cite{spence_srg} and in the papers~\cite{maksimovic_enumeration_2018} and \cite{maksimovic_regular_2023} have different automorphism groups. Therefore, there is reason to believe that the found graph is new (in the sense of being mentioned in the scientific literature).

By means of computer calculation, it was established that the clique number of this graph $G$ is $\omega(G) = 4$, the independence number is $\alpha(G) = 7$, and the chromatic number is \mbox{$\chi(G) = 8$}.

The found strongly regular graph generates one of the known regular two-graphs on 50 vertices; the order of the automorphism group of this two-graph is $117600$ (see~\cite{maksimovic_regular_2023}).

The obtained graph can be realized from the Paley graph $P(49)$. To do this, we consider the finite field $\mathbb{F}_{49} \cong \mathbb{Z}_7[x]/(x^2+6x+3)$. As a primitive element of the field, we choose $\alpha = x$, where $x$ is a root of the defining irreducible polynomial. Next, we construct the graph $P(49)$ based on this field, add an isolated vertex, and perform the Seidel switching operation in the resulting graph on 50 vertices with the switching set
\begin{multline}
W = \{ 1, 3, 5, x + 4, x + 1, 2x + 4, 6x + 1, 6x + 4, 6x + 5, 2x + 1, 3x + 1,\\ 4x + 1, 5x + 1, x + 6, 4x, 4x + 2, 3x + 4, 5x, 3x + 2, 5x + 5, 2x + 5 \}.
\end{multline}

\section{Conclusion}
\label{sec:conclusion}
In this paper, two results are obtained: it is shown that there exists an infinite sequence of Paley graphs that (excluding the zero vertex) can be partitioned into subgraphs isomorphic to the Petersen graph; a new strongly regular graph with parameters $(50, 21, 8, 9)$ is constructed. In both cases, the idea of pairing cosets of the multiplicative group of a finite field, considered as subsets of the vertices of a Paley graph, was used.

The author considers it promising to search for other strongly regular graphs by combining two or more orbits into groups within a Paley graph.

\printbibliography

\end{document}